\numberwithin{equation}{section} 
\newtheorem{theorem}[equation]{Theorem}
\newtheorem{proposition}[equation]{Proposition}
\newtheorem{corollary}[equation]{Corollary}
\newtheorem{problem}[equation]{Problem}
\theoremstyle{definition}
\newtheorem{definition}[equation]{Definition}
\newtheorem{example}[equation]{Example}
\newcommand{\R}[1]{\mathbb{R}^{#1}}
\newcommand{\Z}[1]{\mathbb{Z}^{#1}}
\newcommand{\embed}[0]{\hookrightarrow}
\newcommand{\T}{\rotatebox[origin=c]{180}{$\scriptscriptstyle \perp $}}
\newcommand{\dsum}{\displaystyle\sum}
\begin{document}
\title{\sffamily Generalized Affine Programming $\&$ Duality Gap with non-Division Rings}
\author{ \textsc{Tien Chih} \\[0.25em]}
\date{\today}
\maketitle

\begin{abstract}
Classical primal-dual affine programming takes place over finite dimensional real vector spaces.  This results in beautiful duality theory, connecting the optimal solutions of the primal maximization problem and the dual minimization problems.  These results include the Existence Duality Theorem \cite{Tucker}, which guarantees optimal solutions to any feasible bounded program;  and the Strong Duality Theorem \cite{Tucker}, which implies that optimal solutions for primal and dual programs must have the same objective value.  In a common extension of classical affine programming, we see that the Strong Duality does not hold when ring of scalars is the integers.  Extension of classical affine programming results to ordered division rings are explored in \cite{Bartl}.  In this paper, we describe the generalized setting of affine programming using ordered ring (not necessarily division), and classify the rings for which the Existence Duality Theorem or the Strong Duality Theorem fail.
\end{abstract}

\section{Classical and Integer Affine Programming}

In classical primal-dual affine programming one is asked to solve the following problems:

\begin{problem}[\cite{Tucker}]\label{Classic}
Given $A\in \R{n\times m}, \vec{b}\in \R{m}, \vec{c}\in \R{n}$, find $\vec{x}\in \R{n}$ subject to:  
\begin{eqnarray*}
A\vec{x}&\leq&\vec{b} \\ 
\vec{x}&\geq&0  
\end{eqnarray*}
so that  $f(\vec{x})=\vec{c}^{\T}\vec{x}$ is maximized.  This is called the \textit{primal maximization program}.

Also, given the same initial data $(A, \vec{b}, \vec{c}, d)$ we can ask to find $\vec{y}\in \R{n}$ subject to:
\begin{eqnarray*}
\vec{y}^{\T}A &\geq& \vec{c}^{\T} \\
\vec{y} &\geq&0  
\end{eqnarray*}

so that  $g(\vec{y})=\vec{y}^{\T}\vec{b}$ is minimized.  This is called the \textit{dual minimization program}.

\end{problem}

This setting leads to some powerful and useful duality results connecting the solutions to the primal and dual problems.  One of them is the \textit{Weak Duality Theorem:}

\begin{theorem}[Weak Duality Theorem \cite{Tucker}]\label{introWD}
Given $A\in \R{m\times n}, \vec{b}\in \R{m}, \vec{c}\in \R{n}, d\in R$, and $f,g$ described in Problem \ref{Classic}, and that $\R{}$ is an ordered ring of scalars), we have that: $$g(\vec{y})\geq f(\vec{x})$$ for feasible $\vec{x}, \vec{y}$.
\end{theorem}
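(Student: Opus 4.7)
The plan is to mimic the classical Weak Duality argument, sandwiching the quantity $\vec{y}^{\T} A \vec{x}$ between $f(\vec{x})$ and $g(\vec{y})$, and to be careful that each step only uses properties that remain valid over a general (not necessarily division, not necessarily commutative) ordered ring.

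First I would record the two feasibility conditions in the form they will be used: primal feasibility gives $A\vec{x}\leq \vec{b}$ with $\vec{x}\geq 0$, and dual feasibility gives $\vec{y}^{\T} A\geq \vec{c}^{\T}$ with $\vec{y}\geq 0$. The key algebraic fact I would invoke is the monotonicity axiom of an ordered ring: if $u\leq v$ and $w\geq 0$, then $uw\leq vw$ and $wu\leq wv$. Applied componentwise, this lets me right-multiply $\vec{c}^{\T}\leq \vec{y}^{\T} A$ by the nonnegative column $\vec{x}$ to obtain $\vec{c}^{\T}\vec{x}\leq (\vec{y}^{\T}A)\vec{x}$, and left-multiply $A\vec{x}\leq \vec{b}$ by the nonnegative row $\vec{y}^{\T}$ to obtain $\vec{y}^{\T}(A\vec{x})\leq \vec{y}^{\T}\vec{b}$.

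Next I would use associativity of matrix multiplication (which only uses associativity and distributivity in the underlying ring, not invertibility or commutativity) to identify $(\vec{y}^{\T}A)\vec{x}$ with $\vec{y}^{\T}(A\vec{x})$, and then chain the two inequalities by transitivity:
\[
f(\vec{x})=\vec{c}^{\T}\vec{x}\leq (\vec{y}^{\T}A)\vec{x}=\vec{y}^{\T}(A\vec{x})\leq \vec{y}^{\T}\vec{b}=g(\vec{y}).
\]
To push the scalar inequalities through sums of componentwise products, I would note that the order on the ring is preserved under finite sums, so the entrywise inequalities between vectors lift to the scalar inequalities above.

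The main conceptual obstacle is simply confirming that none of the classical steps covertly use field or division-ring structure. The only ring-theoretic ingredients are: associativity, distributivity, compatibility of the order with addition, and compatibility of the order with multiplication by nonnegative elements on either side; all of these are built into the definition of an ordered ring. Once this audit is done, no routine calculation remains, and the theorem follows.
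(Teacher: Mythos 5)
Your argument is correct, and every step you flag really does survive in a general ordered ring: right- and left-multiplication of an inequality by a nonnegative element is justified by distributivity together with closure of the positives under products and sums, and associativity handles $(\vec{y}^{\T}A)\vec{x}=\vec{y}^{\T}(A\vec{x})$. The paper reaches the same conclusion by a slightly different packaging. Rather than chaining two inequalities through the middle quantity $\vec{y}^{\T}A\vec{x}$, it first introduces the slack vectors $\vec{t}=\vec{b}-A\vec{x}$ and $\vec{s}=\vec{y}^{\T}A-\vec{c}$, proves the identity (Tucker's Duality Equation) $g(\vec{y})-f(\vec{x})=\vec{s}^{\T}\vec{x}+\vec{y}^{\T}\vec{t}$, and then observes that each summand is a sum of products of nonnegative ring elements, hence nonnegative. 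The two routes are the same computation in disguise: $\vec{s}^{\T}\vec{x}$ is exactly the gap in your first inequality and $\vec{y}^{\T}\vec{t}$ the gap in your second, so the Duality Equation is just your sandwich with the two slacks made explicit. What the paper's version buys is an exact expression for the duality gap $g(\vec{y})-f(\vec{x})$, which it reuses later (e.g.\ in analyzing when the gap is strict); what your version buys is brevity and independence from the slack-variable formalism. One small caution for the noncommutative setting: be consistent about which side the nonnegative factor multiplies on, since $\vec{s}^{\T}\vec{x}$ and $\vec{x}^{\T}\vec{s}$ need not agree entrywise in a noncommutative ring --- your proof handles this correctly by always multiplying $\vec{c}^{\T}\leq\vec{y}^{\T}A$ on the right by $\vec{x}$ and $A\vec{x}\leq\vec{b}$ on the left by $\vec{y}^{\T}$.
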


In other words, equality between feasible solutions implies that both solutions are optimal.  It is also the case that in this classical setting, the converse is true as well:

\begin{theorem}[Strong Duality Theorem \cite{Tucker}]\label{introSD}
Let $\vec{x}^*\in \R{n}, \vec{y}^*\in \R{m}$ be a pair of feasible optimal solutions for the primal and dual programs respectively.  Then $$f(\vec{x}^*)=g(\vec{y}^*).$$
\end{theorem}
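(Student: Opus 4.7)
The plan is to combine the Weak Duality Theorem (Theorem \ref{introWD}) with a theorem-of-the-alternative argument (Farkas' Lemma) to pin down both inequalities between $f(\vec{x}^*)$ and $g(\vec{y}^*)$. Weak Duality gives us, for free, that $g(\vec{y}^*)\geq f(\vec{x}^*)$, so the entire task reduces to producing the reverse inequality $g(\vec{y}^*)\leq f(\vec{x}^*)$. Rather than manipulating $\vec{y}^*$ directly, I would produce \emph{some} dual-feasible $\vec{y}$ with $g(\vec{y})\leq f(\vec{x}^*)$; then by optimality of $\vec{y}^*$ on the dual side (minimization) we have $g(\vec{y}^*)\leq g(\vec{y})\leq f(\vec{x}^*)$, which combined with Weak Duality closes the loop.

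To construct such a $\vec{y}$, I would exploit the optimality of $\vec{x}^*$. Writing $z^*=f(\vec{x}^*)=\vec{c}^{\T}\vec{x}^*$, the system
\begin{equation*}
A\vec{x}\leq \vec{b},\qquad \vec{x}\geq 0,\qquad \vec{c}^{\T}\vec{x}>z^*
\end{equation*}
has no solution, because any such $\vec{x}$ would be primal-feasible with strictly larger objective value than the optimum. This is precisely the setup for Farkas' Lemma (or equivalently the Motzkin transposition theorem): the infeasibility of this system over $\mathbb{R}^n$ is equivalent to the existence of a nonnegative vector $\vec{y}\in\R{m}$ satisfying $A^{\T}\vec{y}\geq \vec{c}$ and $\vec{b}^{\T}\vec{y}\leq z^*$. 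Such a $\vec{y}$ is dual-feasible by construction, and $g(\vec{y})=\vec{y}^{\T}\vec{b}\leq z^*=f(\vec{x}^*)$, which is exactly the desired witness.

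The main obstacle, and the step that actually carries the mathematical weight, is Farkas' Lemma itself. Over $\mathbb{R}$ it follows from the separation of a point from a finitely generated (and hence closed) convex cone, which in turn relies on completeness of the ordered field together with convexity arguments that are not available over arbitrary ordered rings. This is precisely why the classical statement must be re-examined once one drops to $\mathbb{Z}$ or to more general ordered rings, and I expect this is where the generalized theory in the sequel will diverge from the classical picture. For the present classical statement, however, one can simply invoke the standard Farkas' Lemma and the proof closes in the two steps above.
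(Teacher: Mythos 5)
The paper gives no proof of this statement at all---it is quoted from \cite{Tucker} as a known classical fact---so there is no internal argument to compare yours against; what you have written is the standard Farkas-based proof, and its overall architecture is correct. Weak Duality (Theorem \ref{introWD}) gives $g(\vec{y}^*)\geq f(\vec{x}^*)$, and producing any dual-feasible $\vec{y}$ with $g(\vec{y})\leq z^*:=f(\vec{x}^*)$ does finish the argument by dual optimality; reducing that to the inconsistency of the system $A\vec{x}\leq\vec{b}$, $\vec{x}\geq 0$, $\vec{c}^{\T}\vec{x}>z^*$ is exactly the right move. The one step you state too bluntly is the alternative itself: because the system contains a strict inequality, it is not an instance of the vanilla Farkas Lemma but of the Motzkin transposition theorem (or of Farkas applied to a homogenized system), and what that produces is a pair $\vec{y}\geq 0$, $\lambda\geq 0$ with $\vec{y}^{\T}A\geq\lambda\vec{c}^{\T}$ and $\vec{y}^{\T}\vec{b}\leq\lambda z^*$, not immediately a certificate with $\lambda=1$. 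You must explicitly exclude the degenerate case $\lambda=0$, in which one only obtains $\vec{y}\geq 0$ with $\vec{y}^{\T}A\geq 0$ and $\vec{y}^{\T}\vec{b}<0$; this is where primal feasibility enters, since such a $\vec{y}$ paired with $\vec{x}^*$ gives $0\leq\vec{y}^{\T}A\vec{x}^*\leq\vec{y}^{\T}\vec{b}<0$, a contradiction. Once that case is ruled out you may normalize $\lambda=1$ and your witness exists as claimed. Your closing observation is also the correct diagnosis for the rest of the paper: the separation and completeness arguments underlying Farkas are precisely what fail over $\mathbb{Z}$ and over general ordered non-division rings, which is why Example \ref{ceSD} and Theorem \ref{propcounter} exhibit a genuine duality gap in those settings.
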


The last classical theorem we will discuss in this paper is a classification result, describing the possible types of solutions for primal-dual programs, the \textit{Existence-Duality Theorem}:

\begin{theorem}[Existence-Duality Theorem \cite{Tucker}]\label{EDT}
Given a primal-dual affine program, exactly one of the following hold:
\begin{enumerate}
\item Both the primal and dual programs are infeasible.
\item The primal program is infeasible and the dual program is unbounded.
\item The dual program is infeasible and the primal program is unbounded.
\item Both the primal and dual program achieve an optimal solution.
\end{enumerate}
\end{theorem}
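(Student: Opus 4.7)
The plan is to establish mutual exclusivity of the four cases using Weak Duality, and case exhaustiveness using a Farkas-type certificate, with Strong Duality doing the heavy lifting in the ``both feasible'' case.

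For mutual exclusivity, I would invoke Theorem \ref{introWD}: every feasible primal-dual pair $(\vec{x}, \vec{y})$ satisfies $f(\vec{x}) \leq g(\vec{y})$, so the existence of a feasible dual solution bounds $f$ above on the primal feasible set, and symmetrically any feasible primal solution bounds $g$ below. This rules out overlap between case (4) and cases (2) or (3), and between cases (2) and (3) themselves. Together with the trivial disjointness of case (1) from the others, the four cases are pairwise incompatible.

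For exhaustiveness, I would partition on the feasibility status of each program. The only nontrivial content is to rule out: (a) primal feasible and bounded above but dual infeasible; (b) the symmetric situation; and (c) both feasible and bounded but optima not attained. For (a), I would apply Farkas' Lemma to the dual system $\vec{y}^{\T} A \geq \vec{c}^{\T}$, $\vec{y} \geq 0$ augmented with the bound $\vec{y}^{\T}\vec{b} \leq M$: it yields either a dual feasible $\vec{y}$ (contradicting dual infeasibility) or a primal recession direction $\vec{d} \geq 0$ with $A\vec{d} \leq 0$ and $\vec{c}^{\T}\vec{d} > 0$ along which $f$ grows unboundedly from any feasible base point, giving the required primal unboundedness. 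Case (b) is symmetric. For case (c), I would invoke the Strong Duality Theorem to obtain a common value $v=\sup f = \inf g$, then use polyhedrality of the feasible sets to show $v$ is attained at a vertex of each (e.g.\ by running the simplex method from a feasible basis, or by a direct extreme-point argument on bounded level sets).

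The main obstacle is the Farkas/Strong-Duality step. Over $\mathbb{R}$ it is standardly proved by Fourier--Motzkin elimination or a separating-hyperplane argument on closed polyhedral cones; vertex attainment of a bounded linear objective on a nonempty polyhedron then closes out the existence claim. Both arguments rely essentially on division in the scalar ring and on the order-completeness of $\mathbb{R}$, which is precisely the structural feature the rest of the paper relaxes and for which the theorem's conclusions are then classified.
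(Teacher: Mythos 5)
The paper offers no proof of this theorem at all: it is quoted as classical background with a citation to \cite{Tucker}, so there is nothing in-paper to compare your argument against. Taken on its own terms, your sketch is the standard real-scalar proof and is broadly sound: Weak Duality (Theorem \ref{introWD}) handles mutual exclusivity, and the Farkas alternative for the dual system --- either a feasible $\vec{y}$ or a recession direction $\vec{d}\geq 0$ with $A\vec{d}\leq 0$ and $\vec{c}^{\T}\vec{d}>0$ --- correctly converts dual infeasibility into primal unboundedness (the extra constraint $\vec{y}^{\T}\vec{b}\leq M$ is not needed for that step). Two caveats. First, invoking the Strong Duality Theorem \ref{introSD} inside your case (c) risks circularity: in Tucker's development, which the paper is following (Proposition \ref{key} is its generalization of Tucker's key equation), Strong Duality is obtained as a \emph{consequence} of the Existence-Duality analysis via a combinatorial lemma on skew-symmetric matrices and tableau pivoting, not the other way around. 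If you take the Farkas route you should derive the common optimal value directly from the Farkas certificate rather than citing Strong Duality as an external input. Second, ``attained at a vertex'' is not literally correct, since the feasible polyhedron may contain lines and so have no vertices; attainment of a bounded linear objective on a nonempty polyhedron still holds (by Fourier--Motzkin elimination or the Minkowski--Weyl decomposition), so the conclusion survives, but the extreme-point phrasing should be replaced. Your closing observation is the right one: every step leans on division and on the completeness of $\mathbb{R}$, which is precisely the structure the remainder of the paper strips away.
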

Another way of interpreting this theorem is as this following collection of statements:
\begin{corollary}[Existence-Duality Theorem]\label{corEDT}
Given a primal-dual affine program, then the following hold:
\begin{enumerate}
\item If both programs are infeasible then there are no statements about the boundedness or optimality of either program.
\item If both programs are feasible and the primal program is infeasible, then the dual program is unbounded.
\item If both programs are feasible and the dual program is infeasible, then the primal program is unbounded
\item If either program is feasible and bounded, then both programs are feasible and bounded and also obtain an optimal solution.
\end{enumerate}
\end{corollary}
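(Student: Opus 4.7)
The plan is to derive each item of Corollary~\ref{corEDT} directly from Theorem~\ref{EDT} by case analysis, exploiting the fact that Theorem~\ref{EDT} partitions all possibilities into four mutually exclusive cases. The entire argument amounts to checking that the hypotheses in each corollary item pin down exactly one of the theorem's four cases.

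For item (1), if both programs are infeasible then we are necessarily in case~(1) of the theorem, and no other case applies. Since boundedness and optimality are only meaningful for feasible programs, the theorem asserts nothing further in this situation, which is exactly the content of the corollary item.

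For items (2) and (3) I would proceed by elimination. Under the hypothesis that the primal is infeasible, only cases (1) and (2) of Theorem~\ref{EDT} remain available. Adding the assumption that the dual is feasible excludes case~(1), so we must be in case~(2), whence the dual is unbounded. Item (3) follows by the symmetric argument with the roles of primal and dual exchanged.

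For item (4), suppose the primal program is feasible and bounded; the argument for a feasible, bounded dual is symmetric. Primal feasibility rules out cases~(1) and~(2) of Theorem~\ref{EDT}, while primal boundedness rules out case~(3), in which the primal is unbounded. Only case~(4) remains, so both programs are feasible, bounded, and attain optimal solutions. There is no real mathematical obstacle here beyond bookkeeping: the main step is simply to keep track of which hypothesis excludes which case, and Theorem~\ref{EDT} supplies the rest.
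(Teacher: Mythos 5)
Your case-elimination from Theorem~\ref{EDT} is correct and is exactly the argument the paper intends: the corollary is stated there without proof as an immediate reinterpretation of the theorem, and your bookkeeping of which hypothesis excludes which case supplies the missing details. You also (reasonably) read the hypotheses of items (2) and (3), which as literally written are self-contradictory (``both programs are feasible and the primal program is infeasible''), as ``the primal is infeasible and the dual is feasible,'' which is clearly what the paper means.
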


A well studied variant of this classical programming is \textit{integer programming} where $\Z{}$ replaces $\R{}$ as the underlying ring of scalars.   When we do this, we see that both the Strong Duality Theorem and $(2), (3)$ and the Existence-Duality Theorem fail to extend this setting.

\begin{example}[Counterexample to Strong Duality]\label{ceSD}
Let $A\in \Z{1\times 1}, A:=\begin{pmatrix} 2 \end{pmatrix}, \vec{b}\in \Z{1}, \vec{b}:=1, \vec{c}\in \Z{1}, \vec{c}:=1, d\in \Z{}, d:=0$.  Our primal program is finding $x\in \Z{}$ subject to:  $$2x<1, x>0 $$ that maximizes $f(x)=x$.  The dual program is finding $y\in \Z{}$ subject to $$2y>1, y>0$$ that minimizes $g(y)=y$.  We can verify that our optimal solutions are  $x=0, y=1$.  However $f(0)=0<1=g(1)$, and so the Strong Duality Theorem does not hold for $\Z{}$.

\end{example}

\begin{example}[Counterexample to 2,3 of Existence-Duality]
Let $A\in \Z{2\times 1}, A:=\begin{pmatrix} 2 \\ -2 \end{pmatrix}, \vec{b}\in \Z{2}, \vec{b}:=:=\begin{pmatrix} 1 \\ -1 \end{pmatrix}, \vec{c}\in \Z{1}, \vec{c}:=0, d\in \Z{}, d:=0$.  Our primal program is finding $x\in \Z{}$ such that $$\begin{pmatrix} 2 \\ -2 \end{pmatrix}x \leq \begin{pmatrix} 1 \\ -1 \end{pmatrix}, x\geq 0.$$  The dual program is finding $\vec{y}\in \Z{2}$ so that $$\vec{y}^{\T}\begin{pmatrix} 2 \\ -2 \end{pmatrix}\geq 0, \vec{y}\geq 0.$$ We notice that the primal program is only feasible if $2x\leq 1, -2x\leq -1$, which is only possible if $2x=1$, which cannot happen if $x\in \Z{}$.  Thus the primal program is infeasible.  However, the dual program has an optimal solution $\vec{y}= \begin{pmatrix} 0 \\ 0 \end{pmatrix}$ and the dual program is not unbounded (or infeasible).  We can take the transpose of $A$ and swap $\vec{b}, \vec{c}$ to find a program where the dual is infeasible and the primal achieves optimality.
\end{example}

Both of these example exploit the fact that $2$ is not invertible in $\Z{}$.  Thus it is natural to ask: ``If we extend affine programming to scalars over any ordered non-division ring, can we be guaranteed that similar counterexamples will exist?"

\section{Generalized Affine Programming}

When we generalize the setting of affine optimization, one of the possible generalizations is the rings from which we pick our scalars.  In order to reasonably discuss optimization and optimality, one should require that these rings are \textit{ordered} in a reasonable way (i.e.\ not just well-ordered with the axiom of choice).  

\begin{definition}[Ordered Ring \cite{Lam}]
A ring $R$ is \textit{ordered} (sometimes called \textit{trichotomy ordered}), if there is a non-empty subset $P\subset R$ called the ``positives" with the following properties:
\begin{enumerate}
\item $R$ can be partitioned into the disjoint union: $P\sqcup \{0\}\sqcup -P$ (this is called the trichotomy property).  
\item Given $a,b\in P$, then $a+b\in P$.  
\item Given $a,b\in P$, then $ab\in P$.  
\end{enumerate}
We say that $a\geq b$ if $a-b\in P\cup \{0\}$.    We also say that $a>b$ if $a-b\in P$.    
\end{definition}

So rings such as $\Z{}, \mathbb{Q}, \R{}$ are all common examples of ordered rings.  However, we can construct some examples of more general ordered rings.  

\begin{example}
Let $\R{}[x]$ be the ring of polynomials over $\R{}$, and let $$P:=\{p(x):p(x)\ \text{has a positive leading coefficient}\}.  $$  We first notice that polynomials with positive leading coefficients are closed under sums and products.    Moreover, each polynomial either has a positive leading coefficient, or a negative leading coefficient, or is 0.    Thus, $P$ satisfies the conditions of being a set of positives.  
\end{example}

This example may be extended to a \textit{non-commutative} example of an ordered ring.

\begin{example}
Consider the ring $R:=\R{}\langle x,y \rangle/\langle yx-2xy\rangle$, i.e.\   the ring of polynomials over $x,y$ with the skew-product $yx=2xy$.      We can establish a ``lexicographical" ordering of the monomials of $R$, with $y^{n_1}x^{m_1}>y^{n_2}x^{m_2}$ if $n_1>n_2$ or $n_1=n_2, m_1>m_2$.    Then, we can define the positives $P$ of $R$ to be the  polynomials with positive leading coefficient, where leading means greatest with respect to the above lexicographical ordering.  

One can easily see that $P$ satisfies the properties of a collection of positives, and $R$ is non-commutative.  
\end{example}

Letting $R$ be an ordered ring, we can state a more general version of a primal-dual affine programming problem:

\begin{problem}[\cite{Chih}]\label{General}
Given $A\in R^{n\times m}, \vec{b}\in R^{m}, \vec{c}\in R^{n}$, find $\vec{x}\in R^{n}$ and $\vec{y}\in R^{m}$ subject to:  
\begin{eqnarray*}
A\vec{x}&\leq&\vec{b} \\ 
\vec{y}^{\T}A &\geq& \vec{c}^{\T} \\
\vec{x}, \vec{y}&\geq&0  
\end{eqnarray*}
so that  $f(\vec{x})=\vec{c}^{\T}\vec{x}$ is maximized and $g(\vec{y})=\vec{y}^{\T}\vec{b}$ is minimized. These are the primal and dual programs.
\end{problem}

The existence of non-commutative ordered rings complicates the discussion of duality gap.  Let $R$ be a non-commutative ordered division ring and let $a\in R$ be a non-unit, non-central element of $R$.  Then , consider the primal-dual program where $A=a, b=c=1, d=0$, similar to Example \ref{ceSD}.  It is possible that one may find $x, y$ so that $ax<1, ya>1$ and yet $x=y$.  Thus using the non-invertibility of $R$ may not be sufficient to exhibit a duality gap between the primal and dual solutions.  Further investigation into the structure of ordered rings is necessary.

\section{Exhibiting Duality Gap}
We first verify that the Weak Duality still extends to this setting.  We begin with a preliminary discussion:

\begin{definition}
Given $A, \vec{b}, \vec{c}, d, \vec{x}, \vec{y}$ as in Problem \ref{General}, we may define \textit{slack variables}, primal slack variable  $\vec{t}:=\vec{b}-A\vec{x}$, and dual slack variable $\vec{s}:=\vec{y}^{\T}A-\vec{c}$.
\end{definition}

Notice that if $\vec{x}$ satisfies $A\vec{x}\leq \vec{b}$, then $\vec{t}\geq 0$.  Similarly if $\vec{y}^{\T}A-\vec{c}\geq 0, \vec{s}\geq 0$.

\begin{proposition}[\cite{Chih}]\label{key}
Given $A, \vec{b}, \vec{c}, d, \vec{x}, \vec{y}$ as in Problem \ref{General}, we have that:  $$\vec{x}^{\T}\vec{s}+(-1)g(\vec{y})=\vec{y}^{\T}(-\vec{t})+(-1)f(\vec{x}).$$
\end{proposition}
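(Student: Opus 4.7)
The plan is to prove this as a purely algebraic identity: it asserts an equation between two expressions built from $A, \vec{b}, \vec{c}, \vec{x}, \vec{y}$, and the slack variables are defined in terms of these, so substituting the definitions of $\vec{s}$ and $\vec{t}$ should reduce both sides to the same expression. Notably, no ordering, feasibility, or non-negativity hypothesis is needed — the identity should hold for any $\vec{x},\vec{y}$, which matches the strategy of eventually using it together with $\vec{x},\vec{y},\vec{s},\vec{t}\geq 0$ to obtain Weak Duality as a corollary.

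I would start with the left-hand side. Using $\vec{s}=\vec{y}^{\T}A-\vec{c}^{\T}$ and distributing, the pairing $\vec{x}^{\T}\vec{s}$ expands to $\vec{y}^{\T}A\vec{x}-\vec{c}^{\T}\vec{x} = \vec{y}^{\T}A\vec{x}-f(\vec{x})$. Combined with the $-g(\vec{y})$ term, the LHS equals $\vec{y}^{\T}A\vec{x}-f(\vec{x})-g(\vec{y})$. For the right-hand side, using $\vec{t}=\vec{b}-A\vec{x}$, the pairing $\vec{y}^{\T}(-\vec{t})$ expands to $-\vec{y}^{\T}\vec{b}+\vec{y}^{\T}A\vec{x}=-g(\vec{y})+\vec{y}^{\T}A\vec{x}$; combined with $-f(\vec{x})$ this gives $\vec{y}^{\T}A\vec{x}-f(\vec{x})-g(\vec{y})$, matching the LHS.

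The main potential obstacle, and the reason this proposition needs to be stated carefully rather than dismissed as trivial, is the non-commutativity of $R$. In a commutative setting one would be tempted to rewrite $\vec{c}^{\T}\vec{x}$ as $\vec{x}^{\T}\vec{c}$ or to commute scalar entries across matrix products, and such moves are forbidden here. The saving observation is that the shared cancelling term is in both cases literally $\vec{y}^{\T}A\vec{x}$, with the same triple of factors in the same left-to-right order; it arises on the LHS from the $\vec{y}^{\T}A$ piece of $\vec{s}$ acting on $\vec{x}$, and on the RHS from $\vec{y}^{\T}$ acting on $A\vec{x}$ inside $\vec{t}$. So only associativity of matrix multiplication and the distributive law are invoked, both of which hold over any ring. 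Once this is noted, the two computations above complete the proof.
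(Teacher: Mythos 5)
Your proof is correct, and at bottom it is the same computation the paper performs --- substitute the definitions of $\vec{s}$ and $\vec{t}$, distribute, and cancel --- but your execution differs in a way that actually matters here. The paper rewrites the left side as $\vec{x}^{\T}(A\vec{y}-\vec{c})-\vec{b}^{\T}\vec{y}+d=\vec{x}^{\T}A\vec{y}-\vec{x}^{\T}\vec{c}-\vec{b}^{\T}\vec{y}+d$ and then passes to $\vec{y}^{\T}(A\vec{x}-\vec{b})-\vec{c}^{\T}\vec{x}+d$; that step silently replaces $\vec{x}^{\T}A\vec{y}$ by $\vec{y}^{\T}A\vec{x}$ and $\vec{b}^{\T}\vec{y}$ by $\vec{y}^{\T}\vec{b}$, i.e.\ it identifies each $1\times 1$ product with its reversal. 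Over a commutative ring this is harmless, but the paper is explicitly concerned with non-commutative ordered rings, where $\sum_{i,j}x_i a_{ij}y_j$ and $\sum_{i,j}y_j a_{ij}x_i$ need not agree. Your version avoids this entirely: you expand \emph{both} sides to the common expression $\vec{y}^{\T}A\vec{x}-f(\vec{x})-g(\vec{y})$, keeping every product in the same left-to-right order, so only distributivity and associativity are invoked --- and you correctly note that no feasibility or ordering hypothesis is used. The one point to make explicit is notational: since $\vec{s}:=\vec{y}^{\T}A-\vec{c}^{\T}$ is a row vector, the symbol $\vec{x}^{\T}\vec{s}$ in the statement must be read as $\vec{s}\vec{x}$ (slack entries multiplied on the left of the $x_i$), which is exactly the reading your expansion uses and the one under which the identity is true; the reversed reading $\sum_i x_i s_i$ would fail over a non-commutative ring. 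So your argument is not just equivalent to the paper's but is the version of it that survives in the generality the paper claims.
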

\begin{proof}
We note that:
\begin{eqnarray*}
\vec{x}^{\T}\vec{s}+(-1)g(\vec{y})&=&\vec{x}^{\T}(A\vec{y}-\vec{c})-\vec{b}^{\T}\vec{y}+d\\
&=&\vec{x}^{\T}A\vec{y}-\vec{x}^{\T}\vec{c}-\vec{b}^{\T}\vec{y}+d.\\
&=&\vec{y}^{\T}(A\vec{x}-\vec{b})-\vec{c}^{\T}\vec{x}+d\\
&=&\vec{y}^{\T}(-\vec{t})+(-1)f(\vec{x})
\end{eqnarray*}

\end{proof}

This is an extension of ``Tucker's Key Equation".  As a corollary, we have ``Tucker's Duality Equation":

\begin{corollary}\label{tuckerduality}
Given $A, \vec{b}, \vec{c}, d, \vec{x}, \vec{y}$ as in Problem \ref{General}, we have that:
$$g(\vec{y})-f(\vec{x})=\vec{s}^{\T}\vec{x}+\vec{y}^{\T}\vec{t}.$$
\end{corollary}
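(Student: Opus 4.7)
The plan is to obtain this corollary as an immediate algebraic rearrangement of Proposition \ref{key}. That proposition already states the substantive identity
$$\vec{x}^{\T}\vec{s}+(-1)g(\vec{y})=\vec{y}^{\T}(-\vec{t})+(-1)f(\vec{x}),$$
so all that remains is to collect terms so that $g(\vec{y})-f(\vec{x})$ sits alone on one side.

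Concretely, I would first rewrite the key equation as
$$\vec{x}^{\T}\vec{s}-g(\vec{y})=-\vec{y}^{\T}\vec{t}-f(\vec{x}),$$
using the ring axioms (and the $\mathbb{Z}$-module structure of $R^{n}$) to move the $-1$ factors inside. Next I would add $g(\vec{y})+\vec{y}^{\T}\vec{t}+f(\vec{x})$ to both sides and $-f(\vec{x})$ to both sides, yielding
$$\vec{x}^{\T}\vec{s}+\vec{y}^{\T}\vec{t}=g(\vec{y})-f(\vec{x}),$$
which, after swapping the two sides, is the stated identity.

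The only subtlety worth flagging, since $R$ is permitted to be non-commutative, is matching $\vec{x}^{\T}\vec{s}$ in the derivation with $\vec{s}^{\T}\vec{x}$ as written in the corollary. This is resolved by observing that the quantity is a $1\times 1$ scalar built from dimensionally compatible row/column multiplication, and the notational flip of the factors here is cosmetic rather than a product reordering; no real commutativity is invoked. There is no genuine obstacle: the corollary is purely a bookkeeping consequence of Proposition \ref{key}, and the interesting content, namely that the duality gap is an explicit pairing between the primal/dual variables and the opposite slack variables, has already been packaged into the key equation.
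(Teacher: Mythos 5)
Your derivation is exactly what the paper intends: Corollary \ref{tuckerduality} is stated with no proof at all, as an immediate rearrangement of Proposition \ref{key}, and your term-collecting is the right (and only) content. The rearrangement itself is fine.

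The one point where you go wrong is the subtlety you yourself flag and then wave away. Over a non-commutative ring, replacing $\vec{x}^{\T}\vec{s}$ by $\vec{s}^{\T}\vec{x}$ is \emph{not} cosmetic: the first is $\sum_i \vec{x}_i\vec{s}_i$ and the second is $\sum_i \vec{s}_i\vec{x}_i$, and these are genuinely different elements of $R$ in general. The usual escape hatch --- ``it's a $1\times 1$ scalar, so transpose it'' --- does not work here either, because the identity $(AB)^{\T}=B^{\T}A^{\T}$ itself requires commutativity of the scalars (for $1\times 1$ matrices it literally says $ab=ba$). So as literally stated, the corollary's right-hand side does not follow from Proposition \ref{key} without commuting each $\vec{s}_i$ past $\vec{x}_i$. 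To be fair to you, the paper commits the same imprecision (its proof of Weak Duality writes $\vec{s}^{\T}\vec{x}=\sum_i\vec{s}_i\vec{x}_i$ and never reconciles the order with Proposition \ref{key}), and for the downstream application the order is harmless, since either ordering is a sum of products of nonnegative elements and hence nonnegative. But you should either state the corollary with $\vec{x}^{\T}\vec{s}$, matching what the key equation actually yields, or explicitly note that the two orderings agree only up to this reordering and that Weak Duality is insensitive to it --- not assert that no commutativity is invoked.
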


Using this equation, we may extend Weak Duality to this setting:

\begin{theorem}[Weak Duality \cite{Chih}] \label{WD}
Given $A, \vec{b}, \vec{c}, d, \vec{x}, \vec{y}$ as in Problem \ref{General}, we have that: $$g(\vec{y})\geq f(\vec{x})$$ for feasible $\vec{x}, \vec{y}$.
\end{theorem}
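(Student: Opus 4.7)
The plan is to invoke Tucker's Duality Equation (Corollary \ref{tuckerduality}) and then show that in any ordered ring the right-hand side of that equation is a non-negative element, forcing $g(\vec{y}) - f(\vec{x}) \geq 0$.

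First I would write down the identity
\[
g(\vec{y}) - f(\vec{x}) = \vec{s}^{\T}\vec{x} + \vec{y}^{\T}\vec{t},
\]
which was just established, and recall that feasibility of $\vec{x}$ gives $\vec{t} = \vec{b} - A\vec{x} \geq 0$ and feasibility of $\vec{y}$ gives $\vec{s} = \vec{y}^{\T}A - \vec{c} \geq 0$ coordinatewise, together with $\vec{x}, \vec{y} \geq 0$.

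Next I would verify, component by component, that each of $\vec{s}^{\T}\vec{x}$ and $\vec{y}^{\T}\vec{t}$ is non-negative in $R$. This reduces to the claim that if $a, b \in R$ satisfy $a, b \geq 0$, then $ab \geq 0$, and that sums of non-negatives are non-negative. Both facts follow directly from the axioms of an ordered ring: the case $a = 0$ or $b = 0$ gives $ab = 0$; otherwise $a, b \in P$ and axiom (3) yields $ab \in P$. Closure under sums is axiom (2). Applying this to each entry, the dot products $\vec{s}^{\T}\vec{x} = \sum s_i x_i$ and $\vec{y}^{\T}\vec{t} = \sum y_j t_j$ are non-negative, and so is their sum.

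Combining the two, $g(\vec{y}) - f(\vec{x}) \geq 0$, which is the desired inequality. There is no real obstacle here once Corollary \ref{tuckerduality} is in hand; the only subtlety worth flagging is that no commutativity or division is used, so the argument genuinely works over any ordered ring, which is the level of generality required in Problem \ref{General}.
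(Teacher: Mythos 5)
Your proposal is correct and follows essentially the same route as the paper: both invoke Corollary \ref{tuckerduality} and then argue entrywise that $\vec{s}^{\T}\vec{x}$ and $\vec{y}^{\T}\vec{t}$ are non-negative from the ordered-ring axioms. You spell out the closure of non-negatives under products (including the zero cases) slightly more explicitly than the paper does, but the argument is the same.
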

\begin{proof}
Using that $R$ is an ordered ring of scalars; and since $\vec{x}, \vec{y}$ are feasible, each entry $\vec{x}_i, \vec{y}_j\geq 0$ for $1\leq i\leq n, 1\leq j \leq m$.  Moreover, the same is true for $\vec{s}_i, \vec{t}_j$.  Thus $$\vec{s}^{\T}\vec{x}=\dsum_{i=1}^n \vec{s}_i\vec{x}_i\geq 0$$ and $$ \vec{y}^{\T}\vec{t}=\dsum_{j=1}^m \vec{y}_j\vec{t}_j \geq 0.$$
So it follows that:
\begin{eqnarray*}
g(\vec{y})-f(\vec{x})&=&\vec{s}^{\T}\vec{x}+\vec{y}^{\T}\vec{t}\ \geq\ 0\\
g(\vec{y})&\geq &f(\vec{x}).
\end{eqnarray*}
\end{proof}

We may then conclude that in this setting, the solutions to the primal problems will always be bounded above by the solutions to the dual problems and vice versa.  However, when $R$ is not division, we would like to conclude that there are programs where this is a strict bound and equality is not possible.  To show this, we further examine the properties of $R$.

\begin{proposition}[\cite{Chih}]\label{propnocenter}
Let $R$ be an ordered unital ring with set of positives $P$.  Let $a,b\in P$, such that, without loss of generality,  $ab\leq ba$.    Then there is no element $z\in Z(R)$ such that $ab<z<ba$.  
\end{proposition}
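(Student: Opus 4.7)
\textbf{Proof plan for Proposition \ref{propnocenter}.}

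The plan is to assume for contradiction that such a central $z$ exists, then exploit centrality together with two carefully chosen one-sided multiplications by $b$ to sandwich $bab$ strictly between itself, contradicting the trichotomy property of $P$.

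First I would record the standard order-preservation fact: for any $u,v\in R$ and any $c\in P$, if $u<v$ then both $cu<cv$ and $uc<vc$, since $c(v-u)$ and $(v-u)c$ each lie in $P$ by property (3) of the positives. This is the only extra machinery beyond centrality of $z$ that the argument requires.

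Next, suppose toward contradiction that $z\in Z(R)$ satisfies $ab<z<ba$. Left-multiplying $ab<z$ by $b\in P$ gives
$$bab<bz,$$
while right-multiplying $z<ba$ by $b\in P$ gives
$$zb<bab.$$
Because $z$ is central, $bz=zb$, so these combine into
$$bab<bz=zb<bab,$$
which (adding the two positive differences $bz-bab$ and $bab-zb$) forces $0\in P$, violating the disjoint decomposition $R=P\sqcup\{0\}\sqcup-P$. Hence no such $z$ exists; the case $ab=ba$ is vacuous, so the ``without loss of generality'' assumption $ab\leq ba$ suffices.

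The only real insight is the \emph{asymmetric} choice of multiplications by $b$: had I left-multiplied both inequalities (or right-multiplied both) by the same element, I would have produced bounds on $bz$ and $zb$ that do not telescope, and centrality would have had nothing to collapse. Multiplying $b$ onto $z$ from opposite sides in the two inequalities produces the common expression $bab$, after which centrality identifies $bz$ with $zb$ and closes the loop. Multiplying by $a$ instead of $b$ fails analogously, since one would obtain $a^2b$ on one end and $ba^2$ on the other with no reason to compare them. This is the step I expect a first attempt to miss.
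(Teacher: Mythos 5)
Your proof is correct and is essentially the paper's argument in mirror image: the paper right-multiplies $ab<z$ by $a$ and left-multiplies $z<ba$ by $a$ to sandwich $aba$, exactly where you use $b$ on the opposite sides to sandwich $bab$. (Your closing remark that multiplying by $a$ must fail is not quite right --- it works if you swap which inequality receives the left versus the right multiplication, which is precisely the paper's one-line proof $aba<za=az<aba$.)
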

\begin{proof}
Suppose that this were not true, then $aba<za=az<aba$, a contradiction.  
\end{proof}

\begin{corollary}
If $R$ is a unital ring, and there are $a,b\in P$ such that $ab$ is finite and $ba\geq ab$, then $ba=ab+\epsilon$, where $\epsilon$ is zero or infinitesimal.  
\end{corollary}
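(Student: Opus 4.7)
The plan is to apply Proposition \ref{propnocenter} to a rescaled pair $(a, Nb)$ in order to derive a contradiction under the assumption that $\epsilon := ba - ab$ is positive but not infinitesimal. I set $\epsilon := ba - ab$, which is nonnegative by hypothesis, and suppose for contradiction that $\epsilon$ is neither zero nor infinitesimal; concretely, some positive integer $n_0$ satisfies $n_0\epsilon \geq 1$, and doubling to $N := 2n_0$ gives $N\epsilon > 1$.

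Next I locate an integer strictly between $Nab$ and $Nba$. Since $ab$ is finite, so is $Nab$, and the well-ordering of $\mathbb{Z}$ supplies a least integer $k$ with $k > Nab$; by minimality, $k - 1 \leq Nab < k$. The bound $N\epsilon > 1$ then yields
\[
Nba \;=\; Nab + N\epsilon \;>\; Nab + 1 \;\geq\; k,
\]
so $Nab < k < Nba$. Now I apply Proposition \ref{propnocenter} to $a' := a$ and $b' := Nb$, which both lie in $P$ and satisfy $a'b' = Nab \leq Nba = b'a'$; the proposition forbids any central element strictly between them. But $k \in \mathbb{Z} \subseteq Z(R)$ is exactly such an element, yielding the desired contradiction and forcing $\epsilon$ to be zero or infinitesimal.

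The main subtlety, and really the only delicate point, is pinning down the ambient definitions: I take \emph{infinitesimal} to mean a nonzero element $\epsilon$ with $n|\epsilon| < 1$ for every positive integer $n$, and \emph{finite} to mean bounded in absolute value by some positive integer. With these conventions the least-integer extraction is nothing more than the well-ordering of $\mathbb{Z}$, and the entire argument reduces cleanly to a single invocation of Proposition \ref{propnocenter} with $b$ replaced by its integer multiple $Nb$.
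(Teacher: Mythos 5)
Your proof is correct and follows essentially the same route as the paper's: scale the gap $ba-ab$ by a positive integer $N$ so that it exceeds $1$, trap an integer strictly between $Nab$ and $Nba$, and contradict Proposition \ref{propnocenter} applied to $a$ and $Nb$. The only (minor, and in fact tidier) difference is that you handle the possibility that $ba$ is infinite uniformly within the same argument, whereas the paper first disposes of that case separately before treating the non-infinitesimal finite gap.
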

\begin{proof}
We proceed via contradiction, and assume that $ba$ is infinite.  Since $ab$ is finite, there is an $m\in \Z{}_+$ such $ab<m<ba$, which contradicts Proposition \ref{propnocenter}, (recall that $Z\embed Z(R))$.    Thus $ba-ab$ is finite.    If this difference is not infinitesimal or zero, then there is a $n\in \Z{}_+$  such that $nba-nab>1$.   Since $nab$ is finite, we may find an integer $m_1\leq nab$ and moreover, we may find a maximum such integer (else $nba$ would be infinite.) 

There is an integer $z$ in between $nab<z<nba$, since,  $m_1+1>nab$ but $m_1+1<nba$.  Since $z$ is an integer, it is central, and by Proposition \ref{propnocenter}, this is a contradiction.  
\end{proof}

Proposition \ref{propnocenter} allows us to show the existence of a duality gap for all non-division ordered rings.

\begin{theorem}[\cite{Chih}]\label{propcounter}
Let $R$ be an ordered ring, but not a division ring.    Then there is a primal-dual program such that both problems are feasible, and $f(\vec{x})<g(\vec{y})$ for all feasible choices of $\vec{x}, \vec{y}$.  
\end{theorem}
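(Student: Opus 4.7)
The plan is to exhibit a specific $1 \times 1$ primal-dual program, built from a non-unit of $R$, that has both sides feasible and a strict duality gap. Since $R$ is not a division ring, there is some nonzero non-unit $a \in R$; replacing $a$ by $-a$ if necessary, I take $a \in P$. I then set $A = (a)$, $\vec{b} = (1)$, $\vec{c} = (a)$, so that the primal asks for $x \geq 0$ with $ax \leq 1$ and maximizes $f(x) = ax$, while the dual asks for $y \geq 0$ with $ya \geq a$ and minimizes $g(y) = y$. Feasibility is immediate: $x = 0$ is primal-feasible and $y = 1$ is dual-feasible.

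To establish the strict gap I will show, for every feasible pair $(x, y)$, that $f(x) < 1$ and $g(y) \geq 1$ separately. The dual bound is the easier half: rearranging $ya \geq a$ gives $(y - 1)a \geq 0$, and since $a \in P$, the sign rule $(-P)\cdot P \subseteq -P$ forbids $y - 1 \in -P$, forcing $y \geq 1$. For the primal bound, I argue that $ax = 1$ is impossible. First I would record that every ordered ring is a (not necessarily commutative) domain: if $b, c \in P$ then $bc \in P$, and the remaining sign cases for the other combinations of $\pm P$ give the corresponding non-zero conclusion, so no product of nonzero elements vanishes. In such a domain, a one-sided inverse is automatically two-sided, since $ax = 1$ implies $a(xa - 1) = (ax)a - a = 0$ and $a \neq 0$ forces $xa = 1$; this would make $a$ a unit, contradicting the choice of $a$. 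Hence $ax \neq 1$, and combined with $ax \leq 1$ this yields $ax < 1$. Chaining the two bounds gives $f(x) < 1 \leq g(y)$ as required.

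The main obstacle is the primal half, which rests on the lemma that in any ordered ring, one-sided invertibility implies full invertibility. This in turn hinges on the short but essential observation that ordered rings are domains; once that is in place, the standard cancellation identity closes out the argument. An alternative route would be to apply Proposition \ref{propnocenter} with the central element $1$ to directly rule out the configuration $ax < 1 < xa$ that would arise from assuming $f(x) = g(y)$, but this ultimately reduces to the same domain-theoretic cancellation, so the direct approach is more efficient.
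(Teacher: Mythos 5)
Your proof is correct, but it takes a genuinely different route from the paper's in both the choice of program and the mechanism forcing the gap. The paper keeps $\vec{c}=1$ (so $f(x)=x$, $g(y)=y$, mirroring the integer Example \ref{ceSD}) and derives strictness from Proposition \ref{propnocenter}: if $x=y$, the central element $1$ would lie strictly between $xa$ and $ax$, which is impossible, and combined with Weak Duality this yields $x<y$. You instead take $\vec{c}=a$, so the primal objective $f(x)=ax$ is bounded by the constraint itself, the dual constraint $ya\geq a$ forces $g(y)=y\geq 1$ by a sign argument, and the only remaining point --- that $ax=1$ is impossible --- is settled by noting that an ordered ring is a domain and that in a domain a one-sided inverse is automatically two-sided. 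Your version buys some robustness: the two bounds $f(x)<1\leq g(y)$ handle every feasible point uniformly, whereas the paper's argument quietly treats the constraints $ax\leq 1$ and $ya\geq 1$ as strict, needs $x$ positive to invoke Proposition \ref{propnocenter} (so $x=0$ requires a separate word), and needs Weak Duality to convert $x\neq y$ into $x<y$. What it costs is contact with the paper's machinery: Proposition \ref{propnocenter} was introduced precisely to power this theorem and to address the noncommutative worry raised at the end of Section 2, and your argument bypasses it entirely (as your closing remark acknowledges, it could be substituted back in). Both proofs ultimately rest on the same fact, that a positive non-unit cannot satisfy $ax=1$, so this is a legitimate alternative proof of the stated theorem.
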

\begin{proof}
Let $n, m=1$, where $n, m$ are as in Problem \ref{General}. Then let $A=a$, a positive non-invertible element of $R$.    Let $\vec{b}=1$, $\vec{c}=1$, $d=0$.    The primal program then, is to maximize $x\in R$ subject to $xa<1$.    Conversely, the dual program is to minimize $y\in R$ subject to $ay>1$.    For any pair of feasible solutions $x,y\in R$, we have that $xa<1<ay$, and so by Proposition \ref{propnocenter} $x\neq y$, and thus $x<y$.  

\end{proof}

We can then classify the programs where the Strong Duality fails.

\begin{theorem}\label{noSD}
Given $R$ an ordered non-division ring, if there is a smallest positive element $p$, then we may find a primal-dual program where the primal and dual programs obtain optimal solutions, but they do not share the same objective value.
\end{theorem}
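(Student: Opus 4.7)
The plan is to reuse the program from Theorem \ref{propcounter} and to exploit the existence of the smallest positive element to guarantee that the primal and dual optima are both attained while the duality gap persists. The crucial preliminary is to identify $p$: I would first show that the smallest positive element must equal $1$. Since $1 = 1^2$ is a nonzero square, $1 > 0$, so $p \le 1$; and $p^2 > 0$ together with minimality of $p$ gives $p^2 \ge p$, i.e.\ $p(p-1) \ge 0$, so the sign rule in ordered rings together with $p > 0$ forces $p \ge 1$. Hence $p = 1$, so every positive element of $R$ is at least $1$ and there are no positive elements strictly between consecutive positive integers.

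Next I would take a positive non-invertible $a \in R$ (which exists: if a chosen nonzero non-invertible element is negative, its negative is positive and remains non-invertible) and set up the program with $n = m = 1$, $A = a$, $\vec{b} = \vec{c} = 1$, $d = 0$, exactly as in Theorem \ref{propcounter}. Since $a > 0$ we have $a \ge 1$, and since $a \ne 1$ (the identity is invertible) we obtain $a \ge 2$. For the primal, any feasible $x > 0$ satisfies $x \ge 1$, giving $ax \ge a \ge 2 > 1$, which contradicts $ax \le 1$; hence $x^* = 0$ is the unique feasible point and $f(x^*) = 0$. For the dual, $y = 0$ fails $ya \ge 1$, while any $y \ge 1$ yields $ya \ge a \ge 1$, so the feasible set is $\{y : y \ge 1\}$ with minimum $y^* = 1$ and $g(y^*) = 1$. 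Thus both programs attain optima yet $f(x^*) = 0 \ne 1 = g(y^*)$.

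The main obstacle, and the only genuinely new step, is the opening observation $p = 1$; once that is in place the argument mimics Example \ref{ceSD} verbatim and uses no commutativity or one-sided invertibility of $a$.
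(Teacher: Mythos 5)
Your proposal is correct and takes essentially the same route as the paper: it first establishes that the smallest positive element must be $1$, then runs the $1\times 1$ program $A=a$ (a positive non-unit), $\vec{b}=\vec{c}=1$, $d=0$, concluding that the primal optimum is $x^*=0$ with value $0$ and the dual optimum is $y^*=1$ with value $1$. Your intermediate step $a\geq 2$ is a small refinement that rules out $ax=1$ without any appeal to the absence of one-sided inverses, a point the paper's proof leaves implicit.
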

\begin{proof}
We first notice that if $R$ has a smallest positive value, then it is 1.  To see this, suppose that there is a $a\in R, 0<a<1$.  For any $n\in \Z{}, n\geq 0$ we can show that $a^n-a^{n+1}>0$.  Consider that $a^n-a^{n+1}=a^n(1-a)$, since $a, 1-a>0$ this is a product of positive elements and thus positive.  This contradicts there been a smallest positive $p$, and thus no such $a$ exists.

We then let $n, m=1$, where $n, m$ are as in Problem \ref{General}. Then let $A=a$, a positive non-invertible element of $R$.    Let $\vec{b}=1$, $\vec{c}=1$, $d=0$.   Since there is no solution to $0<xa<1$ then $xa>1$ for any positive $x$.  Thus the only feasible primal solution is $x=0$.  Conversely we notice that $a\neq 1$ since $a$ is not a unit.  Thus $ay>1$ for any positive $y$.  Clearly $y=0$ is not feasible, and so the smallest value for $y$ is $y=1$.  Both programs admit optimal solutions, but $f(0)=0, g(1)=1$ and they do not share the same objective value.
\end{proof}

Not only does the Strong Duality theorem fail to extend to programs over non-division rings, but the Existence-Duality theorem also fails to extend.  For programs over any non-division ring, we see that (2), (3) of the Existence-Duality Theorem fail to extend.  We will also see that for rings where 1 is not the smallest positive (i.e.\  the negation of the hypothesis for which the Strong Duality fails), (4) of the Existence-Duality theorem fails instead.

\begin{theorem}
Let $R$ be a non-division ordered ring.  Then there is a primal-dual program over $R$ so that the primal (dual) program is infeasible, but the dual (primal) program is neither infeasible or unbounded.
\end{theorem}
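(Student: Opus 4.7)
The plan is to lift the integer counterexample following Theorem \ref{EDT} to any non-division ordered ring, replacing the integer $2$ with an arbitrary positive non-unit $a\in R$. The crucial preliminary step is that an ordered ring is automatically a domain: if $a,b$ are nonzero then each lies in $P\cup -P$, and the closure properties of $P$ force $ab\in P\cup -P$, so $ab\neq 0$. Consequently, every non-unit in $R$ fails to have even a one-sided inverse: if $ax=1$ then $a(xa-1)=0$ with $a\neq 0$ forces $xa=1$, making $a$ a unit (and symmetrically for a left inverse). Since $R$ is not a division ring, a nonzero non-unit exists, and by trichotomy we may take it to be a positive element $a\in P$; by what was just said, $a$ has neither a left nor a right inverse in $R$.

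For the primal-infeasible construction, take $n=1$ and $m=2$, setting
$$A=\begin{pmatrix}a\\ -a\end{pmatrix},\quad \vec{b}=\begin{pmatrix}1\\ -1\end{pmatrix},\quad \vec{c}=0,\quad d=0.$$
The constraint $A\vec{x}\leq\vec{b}$ forces $ax\leq 1$ and $ax\geq 1$, hence $ax=1$, which is impossible, so the primal is infeasible. For the dual, $\vec{y}^{\T}A\geq\vec{c}^{\T}$ reads $(y_1-y_2)a\geq 0$, and a short sign argument using $a>0$ (if $u=y_1-y_2<0$ then $(-u)a>0$, so $ua<0$, contradicting $ua\geq 0$) shows $y_1\geq y_2$, whence $g(\vec{y})=y_1-y_2\geq 0$. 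The value $0$ is attained at $\vec{y}=0$, so the dual is feasible, bounded, and attains its optimum.

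The dual-infeasible direction is entirely symmetric: take $n=2$, $m=1$, $A=(a,-a)$, $\vec{b}=0$, $\vec{c}=(1,-1)^{\T}$, $d=0$. Now $\vec{y}^{\T}A\geq\vec{c}^{\T}$ forces $ya=1$, impossible since $a$ has no left inverse, while $A\vec{x}\leq\vec{b}$ becomes $a(x_1-x_2)\leq 0$, forcing $x_1\leq x_2$ by the analogous sign argument, so $f(\vec{x})=x_1-x_2\leq 0$ with optimum $0$ attained at $\vec{x}=0$. The only non-routine step in the whole argument is the appeal to the domain property to rule out one-sided inverses of $a$; everything else reduces to direct manipulation of positivity.
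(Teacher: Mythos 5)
Your construction is exactly the paper's --- the same matrix $\begin{pmatrix} a\\ -a\end{pmatrix}$ with $\vec{b}=\begin{pmatrix}1\\ -1\end{pmatrix}$, $\vec{c}=0$, and the transposed data for the symmetric case --- so the proposal is correct and follows the paper's route. You additionally supply two details the paper leaves implicit, namely that an ordered ring is a domain so the non-unit $a$ cannot have even a one-sided inverse (hence $ax=1$ is genuinely unsolvable), and that every feasible dual $\vec{y}$ satisfies $g(\vec{y})=y_1-y_2\geq 0$ so that $\vec{y}=0$ really is optimal; this strengthens rather than changes the argument.
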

\begin{proof}
Let $a\in R$ be a non-unit.  Then let $A\in R^{2\times 1}, A:=\begin{pmatrix} a \\ -a \end{pmatrix}, \vec{b}\in R^{2}, \vec{b}:=\begin{pmatrix} 1 \\ -1 \end{pmatrix}, \vec{c}\in R^{1}, \vec{c}:=0, d\in \Z{}, d:=0$.  The primal program asks us to find $x$ such that $ax\leq 1, -ax\leq -1$, which only occurs when $ax=1$, thus the primal program is infeasible, but the dual program is optimized at $\vec{y}=\begin{pmatrix} 0 \\ 0 \end{pmatrix}$. So clearly the dual program is bounded and feasible.  We can take the transpose of $A$ and swap $\vec{b}, \vec{c}$ to reverse the roles of the primal and dual problems.
\end{proof}

Moreover, in Theorem \ref{noSD} we saw that when $1$ is the smallest positive, the Strong Duality theorem fails to extend.  We will now show that of $1$ is not the smallest positive, then (4) of the Existence-Duality Theorem fails to extend.

\begin{theorem}
Let $R$ be an ordered non-division ring, and suppose $1$ is not the smallest positive.  Then we may can find a primal (or dual) program that is feasible and bounded but does not achieve optimality.
\end{theorem}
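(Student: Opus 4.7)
The plan is to reuse the 1D setup of Theorem \ref{propcounter} and leverage the hypothesis ``$1$ is not the smallest positive'' to build an improving sequence that obstructs optimality. Pick a positive non-unit $a \in R$ (possible because $R$ is not a division ring) and a positive $p \in R$ with $p < 1$ (from the hypothesis). Take $n = m = 1$, $A = a$, $\vec{b} = \vec{c} = 1$, $d = 0$, so the primal is $\max x$ subject to $ax \leq 1,\ x \geq 0$. It is feasible at $x = 0$, and by Weak Duality (Theorem \ref{WD}) bounded above by any dual-feasible $y_0$.

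I would argue that this primal admits no maximum. Suppose $x^*$ is optimal. Then $ax^* \leq 1$; moreover $ax^* \neq 1$, because an ordered ring has no zero divisors, so any one-sided inverse of $a$ is a two-sided inverse, which would contradict that $a$ is a non-unit. Hence $\delta := 1 - ax^* > 0$, and it suffices to exhibit $\epsilon > 0$ with $a\epsilon \leq \delta$: then $x^* + \epsilon$ is feasible and $f(x^* + \epsilon) > f(x^*)$. To produce $\epsilon$, consider the strictly decreasing sequence $\{ap^n\}_{n \geq 0}$, whose consecutive gaps $ap^n(1-p)$ are positive. A telescoping inequality gives $a \geq (N+1)\,c\,(1-p)$ for every $N$ whenever $\inf_n ap^n \geq c > 0$, so combined with a dual-feasible $y_0$ satisfying $y_0 a \geq 1$ we obtain $(N+1)\,c\,(1-p)\,y_0 \leq y_0 a$ for all $N$; in an Archimedean setting this forces $c = 0$, whence some $ap^N \leq \delta$ and $\epsilon := p^N$ finishes the contradiction.

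The main obstacle is handling non-Archimedean rings where the 1D primal degenerates. If $a$ is ``infinite'' in $R$ (as $X$ is in $\mathbb{R}[X]$ under leading-coefficient order) then only $x = 0$ is primal-feasible and optimality is trivially attained; in that case I would instead use the symmetric \emph{dual} program $\min y$ subject to $ya \geq 1$, running the analogous descent $y^* \mapsto y^* - p^N$. If $a$ is ``infinitesimal'' (as $X$ is in $\mathbb{R}[[X]]$) then the 1D primal is unbounded and the 1D dual infeasible; in that case the plan is to pass to the 2D dual with $A = \begin{pmatrix} 1 \\ a \end{pmatrix}$, $\vec{b} = \begin{pmatrix} 1 \\ 0 \end{pmatrix}$, $\vec{c} = 1$, whose objective $y_1$ is bounded below by $0$ but never attains its infimum, because $ay_2 < 1$ for every $y_2 \geq 0$ and $\sup_{y_2 \geq 0} ay_2$ fails to exist in $R$. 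Organizing these three variants uniformly (and making the telescoping/descent step rigorous outside the Archimedean regime) is the subtle part of the argument.
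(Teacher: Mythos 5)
Your preliminary reductions are sound --- in particular the observation that an ordered ring is a domain, so a one-sided inverse of $a$ would be two-sided and hence every feasible $x^*$ has strict slack $1-ax^*>0$, is correct and worth making explicit (the paper leaves it implicit). But the central step of your argument is a genuine gap, and you have named it yourself: the conclusion ``this forces $c=0$'' is available only in Archimedean rings, whereas the ordered rings of this paper are typically non-Archimedean (the paper's own example of $\mathbb{R}[x]$ ordered by leading coefficient already violates the Archimedean property). Concretely, in a non-Archimedean ring the telescoping bound $a\geq (N+1)c(1-p)$ can hold for \emph{every} $N$ with $c>0$: take $\mathbb{Z}[\epsilon]$ with $\epsilon$ a positive infinitesimal, $a=2$, and $p=1-\epsilon$; then every $ap^n=2(1-\epsilon)^n$ exceeds $1$, so no $N$ ever produces $ap^N\leq\delta$ and the descent never starts. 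Nothing in the hypotheses guarantees a ``good'' $p$ exists. Your fallback trichotomy (``$a$ infinite,'' ``$a$ infinitesimal,'' otherwise) is not exhaustive --- a finite, non-infinitesimal $a$ can sit inside a ring saturated with infinitesimals, as above --- and your two alternative constructions (the additive descent $y^*\mapsto y^*-p^N$, and the $2\times 1$ dual) either inherit the same Archimedean dependence or are left unverified. So the proposal does not close.

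The paper sidesteps all of this by splitting on an algebraic rather than metric dichotomy: whether $0<xa<1$ has a solution. If some $z$ satisfies $0<za<1$, then from any feasible $x^*$ one improves to $x^*+(1-x^*a)z$, which is feasible because $(1-x^*a)za<(1-x^*a)$ follows from positivity of the product $(1-x^*a)(1-za)$ alone; the increment is \emph{rescaled by the slack itself}, so no Archimedean search for a fixed $\epsilon$ is needed. If no such $z$ exists, then every positive $y$ is dual-feasible (since $ya$ cannot lie in $(0,1)$ and cannot equal $1$), the dual objective is bounded below by $0$, and the \emph{multiplicative} descent $y\mapsto yp$ with $0<p<1$ stays feasible automatically while strictly decreasing the objective. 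Multiplicative rescaling, rather than subtracting a fixed increment located by an Archimedean argument, is the device your proposal is missing. (One caveat you did catch and the paper glosses over: when $a$ is infinitesimal the one-dimensional primal in the first case can be unbounded, so the ``bounded'' clause there requires more care than either you or the paper supplies; your instinct to treat that case separately is right even though your proposed repair is not carried out.)
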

\begin{proof}
Let $a$ be a positive non-unit.  Then there are two cases.

If there is a solution to $0<xa<1$, say $x=z$ then let $A=a, b=1, c=1, d=0$.  Since $za<1$, we have that $(za)^{n+1}<(za)^n$ and thus $f((za)^{n+1}z)=-(za)^{n+1}z>-(za)^{n}z=f((za)^{n}z)$.  Thus the primal program is feasible, and bounded above by 0, but does not obtain an optimal solution.

If there is no solution to $0<xa<1$, then t $A=a, b=1, c=1, d=0$.  Notice that for any $y>0$, $ay>1$.  Since $1$ is not the smallest positive, there is an element $p\in R, 0<p<1$.  We notice that given any feasible $y, yp<y$, but $ayp>1$.  Thus the dual program is feasible and bounded below by 1, but does not obtain an optimal solution.

\end{proof}

\section{Conclusion}

It is natural to extend the idea of affine programming to a more general setting, this extension sometimes includes the ring of scalars over which we do our optimization.  We hope that in doing so, the results which hold and fail are consistent with their natural analogues, that is Strong Duality Theorem and the Existence-Duality Theorem hold for division rings, but not for non-division rings.  While the duality gap is something that is intuitive and simple over the integers, we see that general ordered rings are far more complex.  In particular, not all non-division ordered rings have a smallest element, or even a commutative product.  However, despite these potential complications, one may find general arguments to show these results failing for non-division rings.  In \cite{Chih}, it is shown that these results to extend to ordered division rings, and finite dimensional vector-spaces over these rings.  Thus these results hold and fail in their general, possibly non-commutative analogues.

\section*{Acknowledgements}
I'd like to thank Nikolaus Vonessen for his time and help.   I would especially like to thank my advisor George McRae for his insight efforts and perspective.

\addcontentsline{toc}{section}{References}
\bibliographystyle{alpha}
\bibliography{Research}
\noindent \textsc{Department of Sciences and Mathematics\\ Newberry College\\Newberry, SC 29108, USA} \\
\textit{E-mail}: \href{mailto:tien.chih@newberry.edu}{tien.chih@newberry.edu}\\


\end{document}